\newtheorem{lemma}{Lemma}[section]
\def\proof{{\it Proof.}\nobreak\\}
\def\qed{\hfill$\Box$ \bigskip}
\newtheorem{theorem}{Theorem}[section]
\newtheorem{question}{Question}[section]
\newtheorem{proposition}{Proposition}[section]
\newtheorem{problem}{Open problem}[section]
\begin{document}

\title[Distance labelings: a generalization of Langford sequences]{Distance labelings: a generalization of Langford sequences}

\author{S. C. L\'opez}
\address{%
Departament de Matem\`{a}tica Aplicada IV\\
Universitat Polit\`{e}cnica de Catalunya. BarcelonaTech\\
C/Esteve Terrades 5\\
08860 Castelldefels, Spain}
\email{susana@ma4.upc.edu}

\author{F. A. Muntaner-Batle}
\address{Graph Theory and Applications Research Group \\
 School of Electrical Engineering and Computer Science\\
Faculty of Engineering and Built Environment\\
The University of Newcastle\\
NSW 2308
Australia}
\email{famb1es@yahoo.es}
\date{\today}
\maketitle

\begin{abstract}
A Langford sequence  of order $m$ and defect $d$ can be identified with a labeling of the vertices of a path of order $2m$ in which each labeled from $d$ up to $d+m-1$ appears twice and in which the vertices that have been label with $k$ are at distance $k$. In this paper, we introduce two generalizations of this labeling that are related to distances.

\noindent{\it 2010 Mathematics subject classification:} 11B99 and 05C78.

\noindent{\it Key words:} Langford sequence, distance $l$-labeling, distance $J$-labeling $\delta$-sequence and $\delta$-set.
\end{abstract}

\maketitle
\section{Introduction}
For the graph terminology not introduced in this paper we refer the reader to \cite{Wa,W}. For $m\le n$, we denote the set $\{m,m+1,\ldots, n\}$ by $[m,n]$.
A {\it Skolem sequence} \cite{NicMar67,Sko57} of order $m$ is a sequence of $2m$ numbers $(s_1,s_2,\ldots,s_{2m})$ such that (i) for every $k\in [1,m]$ there exist exactly two subscripts $i,j\in [1,2m]$ with $s_i=s_j=k$, (ii) the subscripts $i$ and $j$ satisfy the condition $|i-j|=k$.
The sequence $(4,2,3,2,4,3,1,1)$ is an example of a Skolem sequence of order $4$.
It is well known that Skolem sequences of order $m$ exist if and only if $m\equiv 0$ or $1$ (mod $4$).

Skolem introduced in \cite{Sko58} what is now called a {\it hooked Skolem sequence} of order $m$, where there exists a zero at the second to last position of the sequence containing $2m+1$ elements. Later on, in 1981, Abrham and Kotzig \cite{AbrKot81}
introduced the concept of \textit{extended Skolem sequence}, where the zero is allowed to appear in any position of the sequence.
An extended Skolem sequence of order $m$ exists for every $m$. The following construction was given in \cite{AbrKot81}:
\begin{equation}\label{eq: extended Skolem}
(p_m,p_m-2,p_m-4,\ldots,2,0,2,\ldots,p_m-2,p_m,q_m,q_m-2,q_m-4,\ldots,3,1,1,3,\ldots,q_m-2,q_m),
\end{equation}
where $p_m$ and $q_m$ are the largest even and odd numbers not exceeding $m$, respectively.
 Notice that from every Skolem sequence we can obtain two trivial extended Skolem sequences just by adding a zero either in the first or in the last position.

Let $d$ be a positive integer. A {\it Langford sequence} of order $m$ and defect $d$ \cite{Sim} is a sequence $(l_1,l_2,\ldots, l_{2m})$ of $2m$ numbers such that (i) for every $k\in [d,d+m-1]$ there exist exactly two subscripts $i,j\in [1,2m]$ with $l_i=l_j=k$, (ii) the subscripts $i$ and $j$ satisfy the condition $|i-j|=k$. Langford sequences, for $d=2$, where introduced in \cite{Langford} and they are referred to as \textit{perfect Langford sequences}. Notice that, a Langford sequence of order $m$ and defect $d=1$ is a Skolem sequence of order $m$.
Bermond, Brower and Germa on one side \cite{BerBroGer}, and Simpson on the other side \cite{Sim} characterized the existence of Langford sequences for every order $m$ and defect $d$.

\begin{theorem}\cite{BerBroGer,Sim}\label{Theo: existence of Lang}
A Langford sequence of order $m$ and defect $d$ exists if and only if the following conditions hold:
(i) $m\ge 2d-1$, and
(ii) $m\equiv 0$ or $1$ (mod $4$) if $d$ is odd; $m\equiv 0$ or $3$ (mod $4$) if $d$ is even.
\end{theorem}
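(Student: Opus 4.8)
The plan is to handle the two directions by quite different means: necessity reduces to a one‑paragraph counting argument, whereas sufficiency is an explicit construction and carries essentially all of the work.

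\emph{Necessity.} Suppose a Langford sequence $(l_1,\dots,l_{2m})$ of order $m$ and defect $d$ exists, and for $k\in[d,d+m-1]$ let $a_k<b_k$ be the two subscripts with $l_{a_k}=l_{b_k}=k$, so that $b_k=a_k+k$. The $m$ pairs $\{a_k,b_k\}$ partition $[1,2m]$, so summing,
\[
\sum_{k=d}^{d+m-1}(2a_k+k)=\sum_{j=1}^{2m}j=m(2m+1),
\]
and using $\sum_{k=d}^{d+m-1}k=\tfrac12 m(2d+m-1)$ this becomes $\sum_{k=d}^{d+m-1}a_k=\tfrac14 m(3m-2d+3)$. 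Two consequences follow at once. First, the right‑hand side is an integer, so $m(3m-2d+3)\equiv 0\pmod 4$; checking the four residues of $m$ modulo $4$, separately for $d$ odd and for $d$ even, reproduces exactly condition (ii). Second, $a_d,a_{d+1},\dots,a_{d+m-1}$ are $m$ distinct elements of $[1,2m]$, hence $\sum_{k=d}^{d+m-1}a_k\ge 1+2+\cdots+m=\tfrac12 m(m+1)$; comparing with the value just computed gives $3m-2d+3\ge 2m+2$, i.e.\ $m\ge 2d-1$, which is condition (i). (In fact equality $m=2d-1$ forces the left endpoints to be precisely $\{1,\dots,m\}$, a useful sanity check on any construction.)

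\emph{Sufficiency.} For the converse I would argue by explicit construction: assuming $m\ge 2d-1$ together with the congruence restriction, I must partition $[1,2m]$ into $m$ pairs $\{a,a+k\}$, one for each difference $k\in[d,d+m-1]$. I would split into four families according to the parity of $d$ and the residue of $m$ modulo $4$ — the same split that appears in the statement — and, in each family, write down the left endpoints $a_k$ as explicit functions of $m$ and $d$. A convenient shape, in the spirit of the extended‑Skolem construction~\eqref{eq: extended Skolem} recalled above, is to reserve a small fixed ``core'' block of positions, place the few short‑difference pairs there by hand, and cover the remaining positions by a bounded number of arithmetic‑progression blocks of pairs peeling in from the two ends; the case $d=1$ then specializes to the classical Skolem and hooked‑Skolem layouts. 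Alternatively, one can shrink the case analysis to a bounded number of base cases per defect by establishing a recursive step that increases the order while keeping the defect fixed, and verifying those base cases directly.

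I expect the whole obstacle to lie in the sufficiency direction: the bookkeeping needed to produce closed formulas that simultaneously (a) cover every admissible pair $(m,d)$, (b) consist of pairwise disjoint pairs exhausting $[1,2m]$, and (c) realize each difference in $[d,d+m-1]$ exactly once. Necessity, by contrast, is the short computation above and needs nothing further. A fully detailed construction is carried out in \cite{BerBroGer,Sim}, which I would follow and adapt.
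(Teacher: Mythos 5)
This theorem is quoted from \cite{BerBroGer,Sim}; the paper itself supplies no proof, so there is nothing internal to compare against. Judged on its own terms, your necessity argument is correct and complete: the identity $\sum_k a_k=\tfrac14 m(3m-2d+3)$ follows from $b_k=a_k+k$ and the fact that the pairs partition $[1,2m]$, the four residue checks (with $2d\equiv 2$ or $0\pmod 4$ according to the parity of $d$) do reproduce condition (ii), and the bound $\sum_k a_k\ge \tfrac12 m(m+1)$ for $m$ distinct positive integers yields $m\ge 2d-1$. This is the standard counting proof of the ``only if'' direction and it stands on its own.

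The gap is the sufficiency direction, which is where essentially all the content of the theorem lives, and which you acknowledge but do not carry out. What you give is a description of the shape a construction would take (split by the parity of $d$ and the residue of $m$ modulo $4$, reserve a core block, peel off arithmetic-progression blocks of pairs), not a construction: no left endpoints $a_k$ are actually written down, no verification that the proposed pairs are disjoint and exhaust $[1,2m]$, and no verification that every difference in $[d,d+m-1]$ is realized. Deferring to \cite{BerBroGer,Sim} is exactly what the paper does, so your proposal is not worse than the source text; but as a proof it establishes only one of the two implications. If a self-contained proof were required, the explicit case-by-case pairings of Simpson (or the recursive reduction of the order with fixed defect, plus finitely many base cases) would have to be reproduced and checked.
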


For a complete survey on Skolem-type sequences we refer the reader to \cite{FranMen2009}.

\subsection{Distance labelings}
Let $L=(l_1,l_2,\ldots, l_{2m})$ be a Langford sequence of order $m$ and defect $d$. Consider a path $P$ with $V(P)=\{v_i: i=1,2,\ldots,2m\}$ and $E(P)=\{v_iv_{i+1}:\ i=1,2,2m-1\}$. Then, we can identify $L$ with a labeling $f: V(P)\rightarrow [d,d+m-1]$ in such a way that,  (i) for every $k\in [d,d+m-1]$ there exist exactly two vertices $v_i,v_j\in [1,2m]$ with $f(v_i)=f(v_j)=k$, (ii) the distance $d(v_i,v_j)=k$. Motivated by this fact, we introduce two notions of distance labelings, one of them associated with a positive integer $l$ and the other one associated with a set of positive integers $I$.

Let $G$ be a graph and let $l$ be a positive integer. Consider any function $f:V(G)\rightarrow [0,l]$. We say that $f$ is a {\it distance labeling} of length $l$ (or {\it distance $l$-labeling}) of $G$ if the following two conditions hold, (i) either $f(V(G))=[0,l]$ or $f(V(G))=[1,l]$ and (ii) if there exist two vertices $v_i$, $v_j$ with $f(v_i)=f(v_j)=k$ then $d(v_i,v_j)=k$. Clearly, a graph can have many different distance labelings. We denote by $\lambda (G)$, the {\it labeling length} of $G$, the minimum length $l$ for which a distance $l$-labeling of $G$ exists.
We say that a distance $l$-labeling of $G$ is {\it proper} if for every $k\in [1,l]$ there exist at least two vertices $v_i$, $v_j$ of $G$ with $f(v_i)=f(v_j)=k$. We also say that a proper distance $l$-labeling of $G$ is {\it regular} of degree $r$ (for short {\it $r$-regular}) if for every $k\in [1,l]$ there exist exactly $r$ vertices $v_{i_1}$, $v_{i_2}$, \ldots, $v_{i_r}$ with $f(v_{i_1})=f(v_{i_2})=\ldots=f(v_{i_r})=k$. Clearly, if a graph $G$ admits a proper distance $l$-labeling then $l\le D(G)$, where $D(G)$ is the diameter of $G$.

Let $G$ be a graph and let $J$ be a set of nonnegative integers. Consider any function $f:V(G)\rightarrow J$. We say that $f$ is a {\it distance $J$-labeling} of $G$ if the following two conditions hold, (i) $f(V(G))=J$ and (ii) for any pair of vertices $v_i$, $v_j$ with $f(v_i)=f(v_j)=k$ we have that $d(v_i,v_j)=k$. We say that a distance $J$-labeling is {\it proper} if for every $k\in J\setminus \{0\}$ there exist at least two vertices $v_i$, $v_j$ with $f(v_i)=f(v_j)=k$. We also say that a proper distance $J$-labeling of $G$ is {\it regular} of degree $r$ (for short {\it $r$-regular}) if for every $k\in J\setminus \{0\}$ there exist exactly $r$ vertices $v_{i_1}$, $v_{i_2}$, \ldots, $v_{i_r}$ with $f(v_{i_1})=f(v_{i_2})=\ldots=f(v_{i_r})=k$.
Clearly, a distance $l$-labeling is a distance $J$-labeling in which either $J=[0,l]$ or $J=[1,l]$. Thus, the notion of a $J$-labeling is more general than the notion of a $l$-labeling.

In this paper, we provide the labeling length of some well known families of graphs. We also study the inverse problem, that is, for a given pair of positive integers $l$ and $r$ we ask for the existence of a graph of order $lr$ with a regular $l$-labeling of degree $r$. Finally, we study a similar question when we deal with $J$-labelings. The organization of the paper is the following one. Section 2 is devoted to $l$-labelings, we start calculating the labeling length of complete graphs, paths, cycles and some others families. The inverse problem is studied in the second part of the section. Section 3 is devoted to the inverse problem in $J$-labelings. There are many open problem that remain to be solve, we end the paper by presenting some of them.

\section{Distance $l$-labelings}
We start this section by providing the labeling length of some well known families of graphs.
\begin{proposition}
The complete graph $K_n$ has $\lambda(K_n)=1$.
\end{proposition}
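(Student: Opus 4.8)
The plan is to establish the two inequalities $\lambda(K_n)\ge 1$ and $\lambda(K_n)\le 1$ separately. For the lower bound I would simply invoke the definition: a distance $l$-labeling has a length $l$ that is by hypothesis a positive integer, so no graph can have labeling length smaller than $1$; in particular $\lambda(K_n)\ge 1$.

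For the upper bound I would exhibit an explicit distance $1$-labeling of $K_n$. Take $f:V(K_n)\to[0,1]$ to be the constant function $f\equiv 1$. Then $f(V(K_n))=\{1\}=[1,1]$, so condition (i) in the definition of a distance $l$-labeling is satisfied. For condition (ii), observe that any two distinct vertices of $K_n$ are adjacent and hence at distance $1$; thus whenever $f(v_i)=f(v_j)=1$ with $v_i\ne v_j$ we have $d(v_i,v_j)=1$, while the case $k=0$ never arises. Hence $f$ is a distance $1$-labeling of $K_n$, which gives $\lambda(K_n)\le 1$. Combining the two bounds yields $\lambda(K_n)=1$.

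There is essentially no obstacle here, as the statement is a warm-up that pins down the conventions. The only points worth a moment's care are the degenerate case $n=1$, where $K_1$ has a single vertex, condition (ii) is vacuous, and the same constant labeling still works; and the remark that the convention ``$l$ a positive integer'' is precisely what rules out $\lambda(K_n)=0$. One could alternatively label a single vertex with $0$ and all the others with $1$ to obtain a distance $1$-labeling whose image is $[0,1]$, illustrating that the labeling is far from unique.
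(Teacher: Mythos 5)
Your proof is correct and follows the same approach as the paper: assign the constant label $1$ to every vertex and check both conditions. The paper's proof is a one-liner that omits the (trivial) lower bound $\lambda(K_n)\ge 1$, which you rightly pin down via the convention that $l$ is a positive integer.
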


\begin{proof}
By assigning the label $1$ to all vertices of $K_n$, we obtain a  {\it distance $1$-labeling} of it. \qed
\end{proof}
\begin{proposition}
The path $P_n$ has $\lambda (P_n)=\lfloor n/2\rfloor$.
\end{proposition}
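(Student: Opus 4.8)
The plan is to prove the two inequalities $\lambda(P_n)\le\lfloor n/2\rfloor$ and $\lambda(P_n)\ge\lfloor n/2\rfloor$ separately. Throughout I would write $V(P_n)=\{v_1,\dots,v_n\}$ with $v_i$ adjacent to $v_{i+1}$, so that $d(v_i,v_j)=|i-j|$.

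For the lower bound, let $f$ be any distance $l$-labeling of $P_n$. First I would note that the label $0$ is used at most once: if $f(v_i)=f(v_j)=0$ then $d(v_i,v_j)=0$, forcing $i=j$. Next, each label $k\in[1,l]$ is used at most twice: if $f$ took the value $k$ at three vertices $v_i,v_j,v_h$ with $i<j<h$, then $d(v_i,v_j)=d(v_j,v_h)=k$ would force $j=i+k$ and $h=i+2k$, whence $d(v_i,v_h)=2k\ne k$ since $k\ge 1$, a contradiction. Since the fibres $f^{-1}(0),f^{-1}(1),\dots,f^{-1}(l)$ cover $V(P_n)$, this gives $n\le 1+2l$, hence $l\ge(n-1)/2$, and as $l$ is an integer $l\ge\lceil(n-1)/2\rceil=\lfloor n/2\rfloor$.

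For the upper bound I would exhibit a distance $\lfloor n/2\rfloor$-labeling; write $m=\lfloor n/2\rfloor$. The cases $n\le 2$ are immediate (label the unique vertex of $P_1$ with $0$, and label both vertices of $P_2$ with $1$). If $n=2m+1$ is odd, I would take an extended Skolem sequence $(s_1,\dots,s_{2m+1})$ of order $m$, which exists by \cite{AbrKot81} (for instance the one in \eqref{eq: extended Skolem}), and set $f(v_i)=s_i$; then $f(V(P_n))=[0,m]$, the value $0$ occurs once, and for each $k\in[1,m]$ the two vertices labeled $k$ sit at index-distance $k$, so $f$ is a distance $m$-labeling. If $n=2m$ is even, I would instead start from an extended Skolem sequence $(s_1,\dots,s_{2m-1})$ of order $m-1$, put $f(v_i)=s_i$ for $1\le i\le 2m-1$, and put $f(v_{2m})=m$; then $f(V(P_n))=[0,m-1]\cup\{m\}=[0,m]$, the labels $0$ and $m$ each occur exactly once (so they impose no distance condition), and for $k\in[1,m-1]$ the two vertices labeled $k$ lie at index-distance $k$ exactly as in the extended Skolem sequence (appending the extra vertex does not change any index-distance), so $f$ is again a distance $m$-labeling. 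Combining the two bounds yields $\lambda(P_n)=\lfloor n/2\rfloor$.

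The only real content is the upper bound construction, and the step most easily overlooked is the even case: one cannot in general realize a partition of $[1,2m]$ into pairs with all differences $1,\dots,m$ (this is precisely the Skolem-sequence existence restriction $m\equiv 0,1\pmod 4$ from Theorem~\ref{Theo: existence of Lang} with $d=1$), so the trick is to let the largest label $m$ appear only once—thereby imposing no distance constraint—and thus reduce to an extended Skolem sequence of order $m-1$, which exists unconditionally.
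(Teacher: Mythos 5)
Your proof is correct and takes essentially the same approach as the paper: the lower bound comes from the observation that no three vertices of a path are pairwise equidistant (so each nonzero label is used at most twice and $0$ at most once), and the upper bound comes from extended Skolem sequences, with a one-element adjustment to handle the even case. The only cosmetic difference is that in the even case you append a singleton label $m$ to an extended Skolem sequence of order $m-1$, whereas the paper deletes an end label from one of order $m$; both yield a valid (non-proper) distance $\lfloor n/2\rfloor$-labeling.
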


\begin{proof}
By previous comment, we know that a Skolem sequence of order $m$ exists if $m\equiv 0$ or $1$ (mod $4$). This fact together with (\ref{eq: extended Skolem}) guarantees the existence of a proper distance $\lfloor n/2\rfloor$-labeling when $n\not\equiv 4, 6$ (mod $8$). By removing one of the end labels of (\ref{eq: extended Skolem}), we obtain a (non proper) distance labeling of length $\lfloor n/2\rfloor$. Thus, we have that $\lambda (P_n)\le \lfloor n/2\rfloor$. Since, there are not three vertices in the path which are at the same distance, this lower bound turns out to be an equality.  \qed
\end{proof}

The sequence that appears in (\ref{eq: extended Skolem}) also works for constructing proper distance labelings of cycles. Thus, we obtain the next result.

\begin{proposition}
Let $n$ be a positive integer. The cycle $C_{n}$ has $\lambda (C_n)=\lfloor n/2\rfloor$.
\end{proposition}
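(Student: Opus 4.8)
The plan is to prove the two inequalities $\lambda(C_n)\le\lfloor n/2\rfloor$ and $\lambda(C_n)\ge\lfloor n/2\rfloor$ separately, using the extended Skolem construction (\ref{eq: extended Skolem}) for the upper bound in the same spirit as the proof for paths. Write $m=\lfloor n/2\rfloor$. For the upper bound I would split on the parity of $n$. If $n=2m+1$, an extended Skolem sequence of order $m$ has exactly $2m+1$ terms and uses precisely the labels $0,1,\dots,m$; transfer it to $V(C_n)=\{v_1,\dots,v_{2m+1}\}$ in cyclic order, so that for each $k\in[1,m]$ the two vertices carrying $k$ have position difference $k$. Since $k\le m$, this difference equals their distance in $C_n$, and we obtain a (proper, indeed $2$-regular) distance $m$-labeling. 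If $n=2m$, take instead an extended Skolem sequence of order $m-1$ (it exists for every $m$, has $2m-1$ terms and uses the labels $0,\dots,m-1$), place it on $2m-1$ consecutive vertices of $C_n$, and give the remaining vertex the label $m$. Position differences of repeated labels are now at most $m-1<m$, hence equal to the corresponding cyclic distances, while the single vertex labeled $m$ makes condition (ii) vacuous; the image is $[0,m]$. In both cases $\lambda(C_n)\le m$.

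For the lower bound, let $f$ be a distance $l$-labeling of $C_n$ and set $n_k=|f^{-1}(k)|$. Two vertices labeled $0$ would be at distance $0$, hence equal, so $n_0\le 1$; for $k\ge 1$, all vertices labeled $k$ are pairwise at distance $k$ in $C_n$. The geometric input is that a set of vertices of $C_n$ that are pairwise at distance $k$ has at most two elements, unless $n=3k$, in which case it may have three. Using the bound $n_k\le 2$ for every $k\in[1,l]$ yields $n=\sum_k n_k\le 1+2l$, so $l\ge\lceil(n-1)/2\rceil=\lfloor n/2\rfloor$; this closes the argument whenever $3\nmid n$, and also for odd $n$ once one accounts for the single label (namely $n/3$) that could occur three times.

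The step I expect to be the main obstacle is precisely the remaining case $n\equiv 0\pmod 6$. Here the label $k=n/3$ may genuinely be carried by three mutually equidistant vertices, so the crude count gives only $l\ge n/2-1$, and one is forced to examine whether, after fixing such a triple, the remaining $n-3$ vertices can be completed to a distance labeling with image $[0,l]$ for some $l<\lfloor n/2\rfloor$. This is a Skolem-type feasibility question on the cycle — in the spirit of the parity conditions of Theorem~\ref{Theo: existence of Lang} — and it is the point that demands a genuinely nontrivial, case-dependent analysis rather than pure counting.
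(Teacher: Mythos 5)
Your upper bound is correct and is essentially the paper's: transfer the extended Skolem sequence (\ref{eq: extended Skolem}) to the cycle, with a small variation in the even case (you append a fresh label $m$ to a sequence of order $m-1$, whereas the paper deletes an end label from a sequence of order $m$; both work, since all position differences of repeated labels stay at most $n/2$ and hence equal the cyclic distances). Your lower bound counting is also correct and is in fact \emph{more} careful than the paper's own argument, which rests on the assertion that ``except for $C_3$ there are not three vertices in the cycle which are at the same distance.'' That assertion is false: in $C_{3k}$ the vertices $v_0$, $v_k$, $v_{2k}$ are pairwise at distance $k$. You correctly isolate the only case this leaves open, namely $n\equiv 0\pmod 6$, where the count gives only $l\ge n/2-1$.

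The gap you flag there is genuine and, unfortunately, cannot be closed, because the proposition itself fails for general $n\equiv 0\pmod 6$. Take $n=12$ and label $v_0,v_1,\ldots,v_{11}$ in cyclic order by $4,5,1,1,4,0,5,3,4,2,3,2$. The three vertices labeled $4$ (positions $0,4,8$) are pairwise at distance $4$; the pairs labeled $5,1,3,2$ occupy positions $\{1,6\},\{2,3\},\{7,10\},\{9,11\}$ and are at distances $5,1,3,2$ respectively; the label $0$ is used once; and the image is $[0,5]$. This is a valid distance $5$-labeling of $C_{12}$, so $\lambda(C_{12})\le 5<6=\lfloor 12/2\rfloor$, while your counting bound gives $\lambda(C_{12})\ge 5$, hence $\lambda(C_{12})=5$. (The case $n=6$ happens to survive: forcing three $2$'s onto $v_0,v_2,v_4$ leaves $v_1,v_3,v_5$ pairwise at distance $2$, which cannot accommodate two adjacent $1$'s, so $\lambda(C_6)=3$.) In short, your proof is complete and correct for all $n$ not divisible by $6$; for $n\equiv 0\pmod 6$ neither your argument nor the paper's establishes the claim, and the claim is actually false at least for $n=12$, so the statement itself needs to be amended rather than the proof completed.
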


\begin{proof}
 Since, except for $C_3$ there are not three vertices in the cycle which are at the same distance, we have that $\lambda (C_n)\ge \lfloor n/2\rfloor$. The sequence that appears in (\ref{eq: extended Skolem}) allows us to construct a (proper) distance $\lfloor n/2\rfloor$-labeling of $C_n$ when $n$ is odd. Moreover, if $n$ is even we can obtain a distance $\lfloor n/2\rfloor$-labeling of $C_n$ from the sequence that appears in (\ref{eq: extended Skolem}) just by removing the end odd label. \qed
\end{proof}
\begin{proposition}
The star $K_{1,k}$ has $\lambda(K_{1,k})=2$, when $k\ge 3$, and $\lambda(K_{1,k})=1$ otherwise.
\end{proposition}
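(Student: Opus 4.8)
The plan is to split the argument according to whether $k\le 2$ or $k\ge 3$. When $k\le 2$ the star is a path, since $K_{1,1}=P_2$ and $K_{1,2}=P_3$, so the preceding proposition immediately gives $\lambda(K_{1,k})=\lfloor (k+1)/2\rfloor=1$; alternatively one checks directly that assigning the label $1$ to the centre and to one leaf, and the label $0$ to the remaining leaf if there is one, is a distance $1$-labeling. So from now on fix $k\ge 3$, write $c$ for the centre and $v_1,\dots,v_k$ for the leaves, and recall that $d(c,v_i)=1$ for every $i$ while $d(v_i,v_j)=2$ whenever $i\ne j$.

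For the upper bound I would exhibit the explicit labeling $f$ defined by $f(c)=f(v_1)=1$ and $f(v_i)=2$ for $2\le i\le k$. Then $f(V(K_{1,k}))=[1,2]$, the only pair of vertices carrying the label $1$ is $\{c,v_1\}$, which is at distance $1$, and every pair of vertices carrying the label $2$ consists of two distinct leaves and is therefore at distance $2$. Hence $f$ is a distance $2$-labeling of $K_{1,k}$, so $\lambda(K_{1,k})\le 2$.

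For the lower bound I would show that $K_{1,k}$ admits no distance $1$-labeling when $k\ge 3$. Such a labeling would have image $\{1\}$ or $\{0,1\}$, and at most one vertex could carry the label $0$, since two vertices with the common label $0$ would have to be at distance $0$ and hence coincide; as $K_{1,k}$ has $k+1\ge 4$ vertices, at least three of them would carry the label $1$ and would therefore be pairwise at distance $1$. But three pairwise adjacent vertices cannot occur in a star: if the centre is one of them the other two are leaves at distance $2$, and if it is not, all three are leaves, again pairwise at distance $2$. This contradiction yields $\lambda(K_{1,k})\ge 2$, and together with the previous paragraph we conclude $\lambda(K_{1,k})=2$. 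The verifications are entirely elementary; the only point that needs a little care is the convention that the label $0$ may be used on at most one vertex, as this is precisely what forces three equal labels and thus drives the lower bound.
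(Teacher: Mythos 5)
Your proof is correct and uses the same explicit labeling as the paper for the upper bound (label $1$ on the centre and one leaf, label $2$ on the rest, and a direct check for $k\le 2$). You additionally spell out the lower bound $\lambda(K_{1,k})\ge 2$ via the triangle-free argument, which the paper leaves implicit; that is a welcome completion rather than a different approach.
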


\begin{proof}
For $k\ge 3$, consider a labeling $f$ that assigns the label $1$ to the central vertex and to one of its leaves, and that assigns label $2$ to the other vertices. Then $f$ is a (proper) distance $2$-labeling of $K_{1,k}$. For $1\le k\le 2$, the sequences $1 -1$ and $0 - 1-1$, where $0$ is assigned to a leaf, give a (proper) distance $1$-labeling of $K_{1,1}$ and $K_{1,2}$, respectively. \qed
\end{proof}

\begin{proposition}
Let $m$ and $n$ be integers with $2\le m\le n$. Then, $\lambda(K_{m,n})=m$. In particular, the graph $K_{m,n}$ admits a proper distance $l$-labeling if and only if, $1\le m\le 2$.
\end{proposition}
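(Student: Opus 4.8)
The plan is to prove the two bounds $\lambda(K_{m,n})\ge m$ and $\lambda(K_{m,n})\le m$ separately, and then deduce the statement on proper labelings. Write $A,B$ for the stable sets of $K_{m,n}$ with $|A|=m\le n=|B|$, and note that $D(K_{m,n})=2$: any two vertices in different parts are adjacent, while two vertices of the same part lie at distance $2$ (they have a common neighbour since the other part is non-empty).

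First I would list the constraints that any distance $l$-labeling $f$ of $K_{m,n}$ must satisfy. At most one vertex is labeled $0$ (two such would be at distance $0$); for each $k\ge 3$, at most one vertex is labeled $k$ (distance $k>D(K_{m,n})=2$ is impossible); and two vertices in the same part can share a label only if that label is $2$, since their distance is exactly $2$ — in particular the entire class $f^{-1}(2)$ is contained in a single part. Also, since $|V(K_{m,n})|=m+n\ge 4$ while $|f^{-1}(0)|\le 1$ and $|f^{-1}(1)|\le 2$ (the latter because $f^{-1}(1)$ is a clique and $K_{m,n}$ is triangle-free), we cannot have $l\le 1$; hence $l\ge 2$ and, by condition (i), the label $2$ does occur.

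For the lower bound, let $Y\in\{A,B\}$ be the part that does not meet $f^{-1}(2)$. Then every vertex of $Y$ receives a label in $\{0,1\}\cup\{3,\dots,l\}$, with at most one vertex labeled $0$, at most one labeled $1$ (two vertices of $Y$ are at distance $2$, not $1$), and at most one labeled $k$ for each $k\in\{3,\dots,l\}$. Hence $|Y|\le 1+1+(l-2)=l$, and since $|Y|\ge\min(m,n)=m$ we conclude $l\ge m$, i.e. $\lambda(K_{m,n})\ge m$. For the reverse inequality I would simply exhibit a distance $m$-labeling: label every vertex of $B$ with $2$, and on $A=\{a_1,\dots,a_m\}$ set $f(a_1)=0$, $f(a_2)=1$ and $f(a_i)=i$ for $3\le i\le m$. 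Its image is $[0,m]$, the only repeated label is $2$ and all pairs inside $B$ are at distance $2$, so condition (ii) holds trivially for every other label; thus $\lambda(K_{m,n})\le m$ and equality follows.

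For the ``in particular'' part, recall from the text that a proper distance $l$-labeling forces $l\le D(G)$. If $K_{m,n}$ had a proper distance $l$-labeling, then $m=\lambda(K_{m,n})\le l\le D(K_{m,n})=2$, so $m=2$ (under the hypothesis $m\ge 2$). Conversely, for $m=2$ and $n\ge 3$ the assignment $f(a_1)=1$, $f(a_2)=0$, $f(b_1)=1$, $f(b_2)=\dots=f(b_n)=2$ is a proper distance $2$-labeling. The one point that needs extra care is the borderline graph $K_{2,2}=C_4$, where $n-1=1$ is too small for that construction: a direct check shows $C_4$ has no proper distance labeling at all (its four vertices would have to split into a class forming an edge and a class of two vertices at distance $2$, which is impossible in $C_4$), so strictly speaking proper distance labelings of $K_{m,n}$ exist precisely when $m=2$ and $n\ge 3$ (together with the star case $m=1$ treated earlier). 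The only genuine obstacle in the argument is the lower bound: a naive global count gives merely $l\ge m-1$, and one must localize to the part avoiding the label $2$ to recover the extra unit.
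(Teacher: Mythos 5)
Your proof is correct and follows essentially the same strategy as the paper: observe that $D(K_{m,n})=2$, that the class of vertices labeled $2$ must lie inside a single part, count the labels available to the other part to get $l\ge m$, and exhibit an explicit labeling with image $[0,m]$ for the upper bound. Your write-up is in fact more careful than the paper's, which compresses the lower and upper bounds into one sentence and leaves the localization step (that the part avoiding $f^{-1}(2)$ forces $l\ge m$ rather than $l\ge m-1$) implicit. One genuine point of divergence: your observation that $K_{2,2}=C_4$ admits no proper distance labeling is correct (the label-$2$ class and the label-$1$ class cannot both be realized among four vertices of $C_4$), so the ``in particular'' clause as stated in the paper needs the caveat $n\ge 3$ when $m=2$; the paper does not address this boundary case.
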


\begin{proof}
Let $X$ and $Y$ be the stable sets of $K_{m,n}$, with $|X|\le |Y|$. We have that $D(K_{m,n})=2$, however the maximum number of vertices that are mutually at distance $2$ is $n$. Thus, by assigning label 2 to all vertices, except one, in $Y$, $1$ to the remaining vertex in $Y$ and to one vertex in $X$, $0$ to another vertex of $X$ we still have left  $m-2$ vertices in $X$ to label.\qed
\end{proof}

\begin{proposition}\label{propo: el paràmetre de S_k^n}
Let $n$ and $k$ be positive integers with $k\ge 4$. Let $S_k^n$ be the graph obtained from $K_{1,k}$ by replacing each edge with a path of $n+1$ vertices. Then
$$\lambda (S_k^n)=\left\{\begin{array}{ll}
                           2(n-1), & \hbox{if}\ k= n-1, \\
                           2n-1, & \hbox{if}\ k=n,  \\
                           2n, & \hbox{if}\ k >n.
                         \end{array}
\right.$$
\end{proposition}

\begin{proof} Suppose that $S_k^n$ admits a distance $l$-labeling with $l<2n$. Then, all the labels assigned to leaves should be different. Moreover, although each even label could appear $k$-times, one for each of the $k$ paths that are joined to the star $K_{1,k}$, odd labels appear at most twice (either in the same or in two of the original forming paths). Thus, at least $2n-2$ labels are needed for obtaining a distance labeling of $S_k^n$.
The following construction provides a distance $2(n-1)$-labeling of $S_k^n$, when $k= n-1$. Suppose that we label the central vertices of each path using the pattern $2-4-\ldots -2(n-1)$. Then, add odd labels to the leaves and if there is not enough leaves, replace some of the even labels by the remaining odd labels, in such a way that every even label appears at least once.
In case $k=n$, we need to introduce a new odd label, which corresponds to $2n-1$. Finally, when $k>n$, we cannot complete a distance $l$-labeling without using $2n$ labels. Fig. \ref{Fig_1c} provides a proper $2n$-labeling that can be generalized in that case.\qed
\end{proof}

Fig. \ref{Fig_1a} and \ref{Fig_1b} show proper distance labelings of $S_4^5$ and $S_5^5$, respectively, that have been obtained by using the above constructions, and then, combining pairs of paths (whose end odd labels sum up to $8$) for obtaining a proper distance $8$-labeling and $9$-labeling, respectively.

\begin{figure}[h]
  \centering
  \includegraphics[width=300pt]{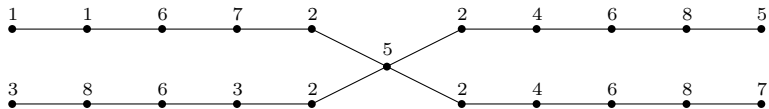}\\
  \caption{A proper distance $8$-labeling of $S_4^5$.}\label{Fig_1a}
\end{figure}

\begin{figure}[h]
  \centering
  \includegraphics[width=300pt]{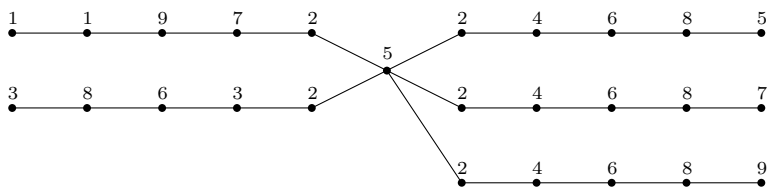}\\
  \caption{A proper distance $9$-labeling of $S_5^5$.}\label{Fig_1b}
\end{figure}
\begin{figure}[h]
  \centering
  \includegraphics[width=300pt]{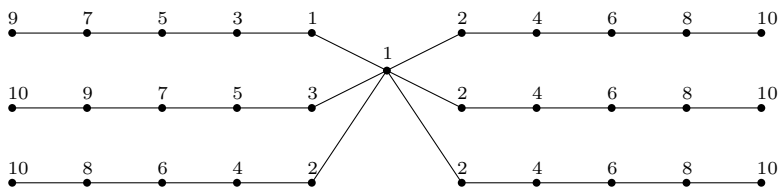}\\
  \caption{A proper distance $10$-labeling of $S_6^5$.}\label{Fig_1c}
\end{figure}

The case $k<n-1$ in Proposition \ref{propo: el paràmetre de S_k^n} requires a more detailed study. Consider the labeling of $S_k^n$ obtained by assigning the labels in the sequence $0-2-4-\ldots -2(n-o)-s^i_1-s^i_2-s^i_o$ to the vertices of the path $P^i$, $i=1,\ldots,k$, where $0$ is the label assigned to the central vertex of $S_k^n$, and $\{s^i_j\}_{i=1,\ldots, k}^{j=1,\ldots, o}$ is the (multi)set of odd labels. By considering the patern $1-1$, $3-1-1-3$, $5-3-1-1-3-5$ to the vertices of one of the paths, it can be checked that, the graph $S_k^n$ admits an $l$-distance labeling with $l\in \{2(n-o), 2(n-o)+1\}$ and $$\lfloor \frac{2n-1}{2k+1}\rfloor\le o\le \lfloor \frac{2n+2}{2k+1}\rfloor.$$ Thus, according to the proof of Proposition \ref{propo: el paràmetre de S_k^n}, we strongly suspect that $$2(n-o)\le \lambda ( S_k^n)\le 2(n-o)+1,$$ where $\lfloor (2n-1)/(2k+1)\rfloor\le o\le \lfloor (2n+2)/(2k+1)\rfloor$.

\begin{proposition}\label{lm: wheel}
For $n\ge 3$, let $W_n$ be the wheel of order $n+1$. Then
$\lambda (W_n)=\lceil n/2\rceil.$
\end{proposition}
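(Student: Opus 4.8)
The plan is to establish $\lambda(W_n)\le\lceil n/2\rceil$ and $\lambda(W_n)\ge\lceil n/2\rceil$ separately, after disposing of the degenerate case $n=3$, where $W_3=K_4$ and hence $\lambda(W_3)=1$ by the first proposition of this section, and after noting that $D(W_n)=2$ for every $n\ge 4$. Throughout, let $c$ denote the hub and $v_1v_2\cdots v_n$ the rim cycle of $W_n$.

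For the upper bound I would exhibit an explicit distance $\lceil n/2\rceil$-labeling $f$. Assign label $1$ to the hub together with a rim vertex adjacent to it, i.e.\ to a clique of $W_n$, which makes condition (ii) hold for the label $1$; the rim vertices not used by label $1$ induce a path in $C_n$, and on a maximum independent set of that path I set $f\equiv 2$, which is legitimate because any two non-adjacent vertices of $W_n$ are at distance exactly $2$. The remaining rim vertices receive label $0$ on one of them and the pairwise distinct labels $3,4,5,\ldots$ on the others, one vertex per label, so for those labels condition (ii) is vacuous. Tallying the size of the independent set against the rest of the rim-path shows that the largest label used is $\lceil n/2\rceil$ and that $f(V(W_n))=[0,\lceil n/2\rceil]$, so $\lambda(W_n)\le\lceil n/2\rceil$.

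For the lower bound, fix any distance $l$-labeling $f$ of $W_n$ with $n\ge 4$ and record four restrictions coming from the small diameter and the clique structure: $|f^{-1}(0)|\le 1$; $|f^{-1}(k)|\le 1$ for every $k\ge 3$, since $D(W_n)=2$; $f^{-1}(1)$ is a clique of $W_n$, hence $|f^{-1}(1)|\le\omega(W_n)=3$; and $f^{-1}(2)$ is a set of pairwise non-adjacent vertices that avoids the hub, i.e.\ an independent set of the rim $C_n$. Because the rim vertices that carry label $1$ are consecutive, if $t\le 2$ of them are used then $f^{-1}(2)$ is an independent set of a path on $n-t$ vertices, so $|f^{-1}(2)|\le\lceil(n-t)/2\rceil$; optimizing over $t\in\{0,1,2\}$ bounds the number of vertices that can carry a label in $\{0,1,2\}$. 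Since the remaining vertices each need a distinct label from $\{3,\ldots,l\}$, comparing with $|V(W_n)|=n+1$ yields the desired lower bound on $l$.

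The step I expect to be the crux is making the lower-bound count genuinely tight: one has to identify which arrangement of the labels $0,1,2$ over $V(W_n)$ is extremal --- in particular ensuring the vertex set aside for label $0$ is not also counted inside the label-$2$ independent set --- and then simplify a floor/ceiling expression. This is exactly where the parity of $n$ matters, because the path of rim vertices left after label $1$ has $n-1$ or $n-2$ vertices and its independence number rounds differently according to $n\bmod 2$; so both the construction and the count should be written out in the cases $n$ even and $n$ odd, with $n=3$ handled separately.
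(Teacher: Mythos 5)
Your upper-bound construction is fine and is essentially the one in the paper. The genuine problem is in the lower bound, at exactly the step you flag as the crux: ``optimizing over $t\in\{0,1,2\}$ \dots\ yields the desired lower bound'' is false when $n$ is odd. You correctly record that $f^{-1}(1)$ may be a triangle (the hub plus two adjacent rim vertices), so take $t=2$: the remaining $n-2$ rim vertices form a path whose independence number is $\lceil (n-2)/2\rceil=(n-1)/2$ for odd $n$, hence labels $0,1,2$ can cover up to $1+3+(n-1)/2$ vertices, leaving only $(n-5)/2$ vertices that need pairwise distinct labels from $[3,l]$. This yields $l\ge (n-1)/2=\lfloor n/2\rfloor$, not $\lceil n/2\rceil$. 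Worse, that weaker bound is attained: in $W_5$ the labeling $f(c)=f(v_1)=f(v_2)=1$, $f(v_3)=f(v_5)=2$, $f(v_4)=0$ is a valid distance $2$-labeling, so $\lambda(W_5)=2<3=\lceil 5/2\rceil$; the analogous construction (label $1$ on a triangle, label $2$ on a maximum independent set of the remaining rim path, $0$ on one leftover vertex, distinct labels $3,4,\dots$ on the rest) shows $\lambda(W_n)=\lfloor n/2\rfloor$ for all odd $n\ge 5$. So the statement itself fails for odd $n$ (and for $n=3$, where, as you yourself note, $W_3=K_4$ has $\lambda=1\ne 2$), and no correct completion of your computation can rescue it.

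For what it is worth, the paper's own proof has the same blind spot: it only ever places label $1$ on the hub and a single rim vertex, tacitly assuming $|f^{-1}(1)|\le 2$, so both its construction and its implicit count miss the triangle option. Your setup is the more careful one --- it is precisely the clique bound $|f^{-1}(1)|\le\omega(W_n)=3$ that you wrote down which exposes the error --- but you then asserted the conclusion instead of carrying the optimization through. Finishing your case analysis gives $\lambda(W_n)=\lceil n/2\rceil$ for even $n\ge 4$ and $\lambda(W_n)=\lfloor n/2\rfloor$ for odd $n\ge 5$.
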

\proof Except for $W_3$, all wheels have $D(W_n)=2$. The maximum number of vertices that are mutually at distance $2$ is $\lfloor n/2\rfloor$ and all of them are in the cycle. Thus, by assigning label $2$ to all these vertices, $0$ to one vertex of the cycle and $1$ to the central vertex and to one vertex of the cycle, we still have to label $\lceil n/2\rceil -2$ vertices. \qed .

\begin{proposition}\label{lm: fan}
For $n\ge 2$, let $F_n$ be the fan of order $n+1$. Then
$\lambda (F_n)=\lfloor n/2\rfloor.$
\end{proposition}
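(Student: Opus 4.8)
The plan is to prove $\lambda(F_n)=\lfloor n/2\rfloor$ by matching bounds, in the spirit of the results above for paths, cycles and wheels. First dispose of $n=2$: there $F_2=K_3$, so $\lambda(F_2)=1=\lfloor 2/2\rfloor$. Assume now $n\ge 3$, write $c$ for the hub and $v_1,\dots,v_n$ for the path vertices. Since $c$ is adjacent to every $v_i$ we have $d(c,v_i)=1$, and for $i<j$ we have $d(v_i,v_j)=1$ when $j=i+1$ and $d(v_i,v_j)=2$ otherwise (the hub realizes distance $2$, and the subpath of length $j-i\ge 2$ is no shorter). Hence $D(F_n)=2$, and, crucially for the lower bound, the only pairs of vertices at distance exactly $2$ are non-adjacent pairs of path vertices; so any set of vertices that are pairwise at distance $2$ is an independent set of $P_n$ and therefore has at most $\lceil n/2\rceil$ elements.

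For the upper bound I would exhibit a distance $\lfloor n/2\rfloor$-labeling $f$. Give the label $2$ to the maximum independent set $\{v_1,v_3,v_5,\dots\}$ of $P_n$ (these $\lceil n/2\rceil$ vertices are pairwise at distance $2$, as (ii) demands), the label $1$ to $c$ and to $v_2$ (adjacent, hence at distance $1$), the label $0$ to $v_4$, and the labels $3,4,\dots,\lfloor n/2\rfloor$, one vertex apiece, to the remaining path vertices $v_6,v_8,\dots$ (singleton classes, so (ii) is vacuous for them). A quick count shows the domain is exhausted and the image is exactly $[0,\lfloor n/2\rfloor]$, so $\lambda(F_n)\le\lfloor n/2\rfloor$; for the small orders $n=3,4,5$, where the list $v_6,v_8,\dots$ is empty, I would just write the labelings out by hand (for instance, for $n=3$ the triangle $\{c,v_1,v_2\}$ gets label $1$ and $v_3$ gets label $0$).

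The lower bound is the heart of the matter. For an arbitrary distance $l$-labeling $f$, partition $V(F_n)$ into the classes $f^{-1}(k)$. Then $|f^{-1}(0)|\le 1$; for $k\ge 3$ there is no pair at distance $k$ since $D(F_n)=2$, so $|f^{-1}(k)|\le 1$; the class $f^{-1}(1)$ is a clique, so $|f^{-1}(1)|\le 3$, the maximum clique of $F_n$ being a hub-triangle $\{c,v_i,v_{i+1}\}$; and $|f^{-1}(2)|\le\lceil n/2\rceil$. Summing, $n+1\le 1+3+\lceil n/2\rceil+(l-2)$, which gives $l\ge\lfloor n/2\rfloor-1$, one unit short of the claim. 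To recover the missing unit I would argue that the two extremes $|f^{-1}(1)|=3$ and $|f^{-1}(2)|=\lceil n/2\rceil$ cannot be attained simultaneously: if $f^{-1}(1)$ is a full triangle $\{c,v_i,v_{i+1}\}$, then the path vertices still available to carry the label $2$ lie in $P_{i-1}\cup P_{n-i-1}$, whose independence number is $\lceil (i-1)/2\rceil+\lceil (n-i-1)/2\rceil$; feeding this estimate, together with the alternative case $|f^{-1}(1)|\le 2$, back into the count should pin down $l\ge\lfloor n/2\rfloor$.

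The diameter computation and the construction are routine; the genuine obstacle is this last packing estimate. Because the hub is universal, it costs nothing to drop it into a triangle labelled $1$, so label $1$ really can absorb three vertices, and one must verify that doing so necessarily removes enough room from the independent set carrying label $2$ to keep the total strictly below $n+1$ unless $l\ge\lfloor n/2\rfloor$. The bookkeeping of this trade-off, and the check that nothing degenerates at the smallest orders where these extremal configurations collide, is where I would concentrate the care in a full write-up.
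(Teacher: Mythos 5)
Your upper-bound construction is fine and is essentially the paper's (label $2$ on a maximum independent set of the path, label $1$ on the hub plus a neighbour, singletons elsewhere), and your diameter/packing observations are correct. The genuine problem is exactly the step you flagged: the ``missing unit'' in the lower bound cannot be recovered, because for even $n$ the two extremal configurations $|f^{-1}(1)|=3$ and $|f^{-1}(2)|=\lceil n/2\rceil$ \emph{do} coexist. Your own estimate shows this: with $f^{-1}(1)=\{c,v_i,v_{i+1}\}$ the room left for label $2$ is $\lceil (i-1)/2\rceil+\lceil (n-i-1)/2\rceil$, and when $n$ is even one can choose $i$ even so that both $i-1$ and $n-i-1$ are odd, giving $\lceil (i-1)/2\rceil+\lceil (n-i-1)/2\rceil = n/2 = \lceil n/2\rceil$. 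Concretely, in $F_6$ set $f(c)=f(v_2)=f(v_3)=1$, $f(v_1)=f(v_4)=f(v_6)=2$, $f(v_5)=0$: every label class satisfies the distance condition and the image is $[0,2]$, so $\lambda(F_6)\le 2<3=\lfloor 6/2\rfloor$. Pushing the same pattern through (triangle $\{c,v_2,v_3\}$ with label $1$, the $n/2$ vertices $v_1,v_4,v_6,\dots,v_n$ with label $2$, and $0,3,4,\dots$ on the remaining $v_5,v_7,\dots,v_{n-1}$) gives $\lambda(F_n)=\lfloor n/2\rfloor-1$ for all even $n\ge 6$. So the statement as printed is false in that range, and no completion of your lower-bound bookkeeping can prove it.

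For odd $n$ your trade-off does close the gap: deleting two adjacent vertices from $P_n$ with $n$ odd drops the independence number from $(n+1)/2$ to $(n-1)/2$, so either $|f^{-1}(1)|\le 2$ or $|f^{-1}(2)|\le \lceil n/2\rceil-1$, and in both cases the count yields $l\ge\lfloor n/2\rfloor$; together with $n=2,4$ (where $l=1$ admits at most $4$ labelled vertices) the formula holds for $n$ odd and $n\in\{2,4\}$. You should also be aware that the paper's own proof has the same blind spot: it assigns label $1$ to the hub and only \emph{one} path vertex when $n$ is even, silently ruling out the hub-triangle, and its closing vertex count does not match the claimed length. In short, your proposal is not a correct proof of the proposition, but the obstruction you isolated is real and, worked out honestly, it refutes the proposition for even $n\ge 6$ rather than completing its proof.
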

\proof Except for $F_2$, all fans have $D(F_n)=2$. The maximum number of vertices that are mutually at distance $2$ is $\lceil n/2\rceil$ and all of them are in the path. Thus, by assigning label $2$ to all these vertices, $0$ to one vertex of the path, $1$ to the central vertex and to one vertex of the path when $n$ is even and to two vertices when $n$ is odd, we still have to label $\lceil n/2\lceil -2$ vertices. \qed
\subsection{The inverse problem}

For every positive integer $l$, there exists a graph $G$ of order $l$ with a trivial $l$-labeling that assigns a different label in $[1,l]$ to each vertex. In this section, we are interested on the existence of a graph $G$ that admits a proper distance $l$-labeling.

We are now ready to state and prove the next result.
\begin{theorem}\label{theo: existence of a graph G with r-regular l-labeling}
For every pair of positive integers $l$ and $r$, there exists a graph $G$ of order $lr$ with a regular $l$-labeling of degree $r$.
\end{theorem}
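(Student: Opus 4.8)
The plan is to build $G$ explicitly by induction on $l$, arranging matters so that at every stage new vertices are attached only as \emph{leaves}; this is the mechanism that prevents edges introduced for large labels from spoiling distances already realized for small labels.

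\emph{Set-up.} Assume $r\ge 2$ (for $r=1$ the statement reduces to the trivial labeling on $l$ vertices). The vertex set of $G$ will be a disjoint union $C_1\cup C_2\cup\dots\cup C_l$ with $|C_k|=r$ for each $k$, and the labeling is $f(v)=k$ for $v\in C_k$; thus $f$ automatically uses exactly the labels $1,\dots,l$, each exactly $r$ times, on $lr$ vertices, so only the distance condition has to be checked, class by class. I would start with $C_1$ a clique $K_r$ (any two of its vertices are at distance $1$) and $C_2$ a set of $r$ further vertices, all joined to one fixed vertex of $C_1$ and to nothing else, so that any two of them are non-adjacent with a common neighbour and hence at distance $2$. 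Fix an arbitrary ordering of each $C_k$.

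\emph{Inductive step.} Suppose $C_1,\dots,C_{k-1}$ have been placed, with the invariant that any two vertices of the same class $C_j$ are at distance $j$ in the current graph. For $k\ge 3$, write $C_{k-2}=\{y_1,\dots,y_r\}$, introduce $r$ new vertices $x_1,\dots,x_r$, join $x_i$ to $y_i$ only, and set $C_k=\{x_1,\dots,x_r\}$ with label $k$. Two remarks complete the step. First, each $x_i$ has degree $1$, so no shortest path between older vertices can pass through it; hence every previously established distance is unchanged, and this stays true when $C_{k+2},C_{k+4},\dots$ are attached later, again as leaves. Second, since $y_1,\dots,y_r$ are distinct, every path from $x_i$ to $x_j$ has the form $x_i,y_i,\dots,y_j,x_j$ with interior avoiding all the leaves $x_m$, so $d(x_i,x_j)=2+d(y_i,y_j)=2+(k-2)=k$, where $d(y_i,y_j)=k-2$ by the induction hypothesis (or by the description of $C_1,C_2$ when $k-2\in\{1,2\}$).

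\emph{Conclusion and the hard point.} Iterating up to $k=l$ gives a graph $G$ of order $lr$ in which, for each $k\in[1,l]$, the $r$ vertices labeled $k$ are pairwise at distance $k$; hence $f$ is a regular $l$-labeling of degree $r$ (and it is proper, since $r\ge 2$). The only genuinely delicate issue is exactly the one the construction is designed to neutralize: newly added edges must never create a shortcut between vertices that already "use up" their prescribed distance. The naive alternative — realizing the distance-$k$ class as the far ends of $r$ pendant paths of length $k/2$ hanging from a single vertex — fails for odd $k\ge 3$, because no three vertices of a tree can be pairwise at an odd distance. This is precisely why even labels are routed through the star $C_2$ and odd labels through the clique $C_1$, via the "$C_k$ attached to $C_{k-2}$" recursion, and why leaves, rather than arbitrary new edges, are the right building block.
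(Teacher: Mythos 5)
Your proof is correct and builds essentially the same graph as the paper: a clique $K_r$ whose vertices carry label $1$, with the odd labels realized along pendant paths rooted at the $r$ distinct clique vertices (so the clique edge supplies the odd parity) and the even labels along $r$ pendant paths rooted at a single clique vertex. The only difference is presentational — you grow the graph by induction on the label, attaching each class as leaves to the class two below it, which makes the verification that new edges never shorten previously realized distances somewhat more explicit than the paper's one-shot description.
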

\begin{proof}
We give a constructive proof. Assume first that $l$ is odd. Let $G$ be the graph obtained from the complete graph $K_r$ by identifying $r-1$ vertices of $K_r$ with one of the end vertices of a path of length $\lfloor l/2\rfloor$ and the remaining vertex of $K_r$ with the central vertex of the graph $S_{r+1}^{\lfloor l/2\rfloor}$. That is, $G$ is obtained from $K_r$ by attaching $2r$ paths of length $\lfloor l/2\rfloor$ to its vertices, $r+1$ to a particular vertex $v_1$ of $K_r$ and exactly one path to each of the remaining vertices $F=\{v_2,v_3,\ldots, v_{r}\}$ of $K_r$. Now, consider the labeling $f$ of $G$ that assigns $1$ to the vertices of $K_r$, the sequence $1-3-\ldots -l$ to the vertices of the paths attached to $F$ and one of the paths attached to $v_1$, and the sequence $1-2-4-\ldots -(l-1)$ to the remaining paths. Then $f$ is a regular $l$-labeling of degree $r$ of $G$. Assume now that $l$ is even. Let $G$ be the graph obtained in the above construction for $l-1$. Then, by adding a leave to each vertex of $G$ labeled with $l-2$ we obtain a new graph $G'$ that admits a regular $l$-labeling $f'$ of degree $r$. The labeling $f'$ can be obtained from the labeling $f$ of $G$, defined above, just by assigning the label $l$ to the new vertices. \qed
\end{proof}

\begin{figure}[h]
  \centering
  \includegraphics[width=164pt]{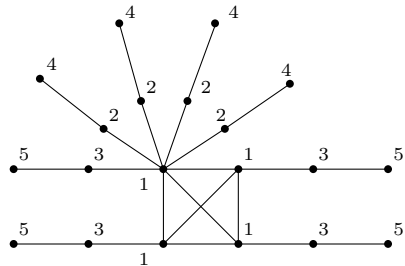}\\
  \caption{A regular $5$-labeling of degree $4$ of a graph $G$.}\label{Fig_2}
\end{figure}

\begin{figure}[h]
  \centering
  \includegraphics[width=176pt]{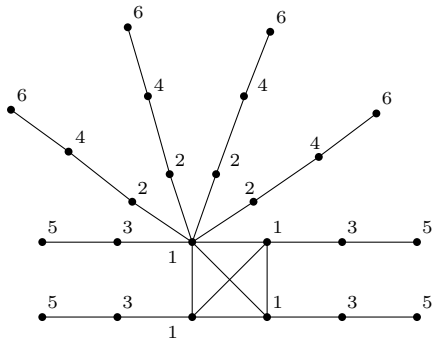}\\
  \caption{A regular $6$-labeling of degree $4$ of a graph $G'$.}\label{Fig_3}
\end{figure}

Notice that, the graph provided in the proof of Theorem \ref{theo: existence of a graph G with r-regular l-labeling} also has $\lambda (G)=l$.
Fig. \ref{Fig_2} and \ref{Fig_3} show examples for the above construction. The pattern provided in the proof of the above theorem, for $r=2$, can be modified in order to obtain the following lower bound for the size of a graph $G$ as in Theorem \ref{theo: existence of a graph G with r-regular l-labeling}.

\begin{proposition}\label{lemma: regular distance $l$-labeling of degree $2$}
For every positive integer $l$ there exists a graph of order $2l$ and size $(l+2)(l+1)/2-2$ that admits a regular distance $l$-labeling of degree $2$.
\end{proposition}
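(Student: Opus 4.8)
The plan is to give an explicit graph $G$ together with an explicit labeling. Let $G$ be obtained from the complete graph $K_{l+1}$, with vertex set $\{z,w_1,w_2,\dots,w_l\}$, by attaching at the vertex $z$ a pendant path $z=q_0,q_1,\dots,q_{l-1}$ on $l-1$ new vertices $q_1,\dots,q_{l-1}$; equivalently, $G$ is $K_{l+1}$ with a path of length $l-1$ attached to one of its vertices (so for $l=1$ the pendant path is trivial and $G=K_2$, and for $l=2$ the graph $G$ is $K_3$ with one pendant edge). First I would record the two numerical facts: $G$ has $(l+1)+(l-1)=2l$ vertices, and $\binom{l+1}{2}+(l-1)=\frac{l(l+1)}{2}+l-1=\frac{(l+1)(l+2)}{2}-2$ edges, which is the claimed order and size.

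Next I would define $f\colon V(G)\to[1,l]$ by $f(z)=1$, $f(q_i)=i+1$ for $1\le i\le l-1$, and $f(w_j)=j$ for $1\le j\le l$, and check that it is a regular distance $l$-labeling of degree $2$. That $f$ is onto $[1,l]$ and that every label occurs on exactly two vertices is immediate: label $1$ occurs on $z$ and $w_1$, while each $k\in[2,l]$ occurs on $q_{k-1}$ and on $w_k$, and no label occurs more often. It then remains to verify the distance condition for these two-element label classes, namely $d(z,w_1)=1$ (clear, since $z$ and $w_1$ are adjacent in $K_{l+1}$) and $d(q_{k-1},w_k)=k$ for every $k\in[2,l]$.

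The crux of the argument — and the only point requiring care — is showing that these distances are \emph{exactly} $k$, i.e. ruling out shortcuts. Here the key structural observation is that $z$ is a cut vertex of $G$ separating the path-vertices $\{q_1,\dots,q_{l-1}\}$ from the clique-part $\{w_1,\dots,w_l\}$; hence every $q_i$–$w_j$ path passes through $z$, so $d(q_i,w_j)=d(q_i,z)+d(z,w_j)=i+1$, the value $d(q_i,z)=i$ being read off the pendant path (which is induced in $G$, so no clique edge can shorten it). Taking $i=k-1$, $j=k$ gives $d(q_{k-1},w_k)=k$, as needed. I expect this cut-vertex bookkeeping to be entirely routine; the genuine content of the proposition is the construction itself — the observation that the ``long'' part of a distance $l$-labeling (a path carrying $l,l-1,\dots,2$) can coexist with a clique placed beyond its far end, because the clique edges and the edges joining them to the cut vertex only ever create detours, never shortcuts, between the relevant pairs. (As the surrounding text indicates, this furnishes a lower bound for the size of such a graph, not necessarily the exact maximum.)
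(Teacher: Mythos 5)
Your construction is exactly the paper's: $K_{l+1}$ with a path on $l-1$ further vertices hanging off one clique vertex $z$ (the paper phrases it as identifying an end of $P_l$ with a vertex of $K_{l+1}$), and your labeling coincides with the paper's assignment of $1\hbox{-}2\hbox{-}\cdots\hbox{-}l$ along the path and $1,1,2,\ldots,l$ on the clique. The proof is correct, and your explicit cut-vertex argument for $d(q_{k-1},w_k)=k$ fills in a verification the paper leaves implicit.
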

\begin{proof}
Let $G$ be the graph of order $2l$ and size $(l+1)l/2+l-1$, obtained from $K_{l+1}$ and the path $P_l$ by identifying one of the end vertices $u$ of $P_l$ with a vertex $v$ of $K_{l+1}$. Let $f$ be the labeling of $G$ that assigns the sequence $1-2-3-\ldots -l$ to the vertices of $P_l$ and $1-1-2-\ldots-l$ to the vertices of $K_{l+1}$ in such a way that $f(u\equiv v)=1$. Then, $f$ is a $2$-regular $l$-labeling of $G$. \qed
\end{proof}
Thus, a natural question appears.
\begin{question}
Can we find graphs that admit a regular distance $l$-labeling of degree $2$ which are more dense that the one of Proposition \ref{lemma: regular distance $l$-labeling of degree $2$}?
\end{question}
\section{Distance $J$-labelings}

It is clear from definition that to say that a graph admits a (proper) distance $l$-labeling is the same as to say that the graph admits a (proper) distance $[0,l]$-labeling. That is, we relax the condition on the labels, the set of labels is not necessarily a set of consecutive integers. In this section, we study which kind of sets $J$ can appear as a set labels of a graph that admits a distance $J$-labeling.

The following easy fact is obtained from the definition.
\begin{lemma}
Let $G$ be a graph with a proper distance $J$-labeling $f$. Then $J\subset [0,D(G)]$, where $D(G)$ is the diameter of $G$.
\end{lemma}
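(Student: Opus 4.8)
The plan is to read the conclusion directly off the two defining properties of a proper distance $J$-labeling, so the argument is short. Recall that in a distance $J$-labeling the set $J$ is a set of nonnegative integers with $f(V(G))=J$, so every element of $J$ is automatically $\ge 0$; the only thing that actually needs an argument is that each element of $J$ is at most $D(G)$.

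First I would dispose of the label $0$: if $0\in J$, then trivially $0\le D(G)$. Now fix any $k\in J\setminus\{0\}$. Since $f$ is \emph{proper}, there are at least two vertices $v_i,v_j$ with $f(v_i)=f(v_j)=k$, and these must be distinct, since for a single vertex $v$ one has $d(v,v)=0\neq k$. Condition (ii) in the definition of a distance $J$-labeling then forces $d(v_i,v_j)=k$. On the other hand, by the definition of the diameter, $d(v_i,v_j)\le D(G)$. Combining these yields $k=d(v_i,v_j)\le D(G)$. Hence every element of $J$ lies in $[0,D(G)]$, i.e. $J\subseteq [0,D(G)]$, as claimed.

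There is essentially no obstacle here; the only point deserving a moment's care is the observation that the phrase ``two vertices'' in the properness condition must be read as \emph{two distinct} vertices, which is exactly what lets one invoke condition (ii) with $v_i\neq v_j$ and thereby obtain a genuine graph distance realizing the value $k$. It is also worth remarking why properness is needed: an arbitrary (non-proper) distance $J$-labeling could legitimately use a label $k>D(G)$ on a single vertex with no conflict, so the hypothesis cannot be dropped.
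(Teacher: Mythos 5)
Your proof is correct and is exactly the argument the paper has in mind: the paper gives no written proof, simply noting that the lemma is ``obtained from the definition,'' and your two-line deduction (properness yields two distinct vertices realizing label $k$ as a distance, which is then bounded by $D(G)$) is the intended one. Your remarks on reading ``two vertices'' as distinct and on the necessity of properness are sound and add nothing incorrect.
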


\subsection{The inverse problem: distance $J$-labelings obtained from sequences.}
We start with a definition. Let $S=(s_1,s_1,\ldots,s_1, s_2,\ldots, s_2, \ldots, s_l,\ldots, s_l)$ be a sequence of nonnegative integers where, (i) $s_i<s_j$ whenever $i<j$ and (ii) each number $s_i$ appears $k_i$ times, for $i=1,2,\ldots, l$. We say that $S$ is a {\it $\delta$-sequence} if there is a simple graph $G$ that admits a partition of the vertices $V(G)=\cup_{i=1}^lV_i$ such that, for all $i\in\{1,2,\ldots, l\}$, $|V_i|=k_i$, and if $u,v\in V_i$ then $d_G(u,v)=s_i$.
The graph $G$ is said to {\it realize} the sequence $S$.

Let $\Sigma=\{s_1<s_2<\ldots <s_l\}$ be a set of nonnegative integers. We say that $\Sigma$ is a $\delta$-set with $n$ degrees of freedom or a {\it $\delta_n$-set} if there is a $\delta$-sequence $S$ of the form $S=(s_1,s_1,\ldots,s_1, s_2,\ldots, s_2, \ldots, s_l,\ldots, s_l)$, in which the following conditions hold: (i) all except $n$ numbers different from zero appear at least twice, and (ii) if $s_1=0$ then $0$ appears exactly once in $S$. We say that any graph realizing $S$ also {\it realizes $\Sigma$}. If $n=0$ we simply say that $\Sigma$ is a {\it $\delta$-set}. Let us notice that an equivalent definition for a $\delta$-set is the following one. We say that $\Sigma$ is a {\it $\delta$-set} if there exists a graph $G$ that admits a proper distance $\Sigma$-labeling.

\begin{proposition}\label{propo: construc_a graph that realizes Sigma}
Let $\Sigma=\{1=s_1<s_2<\ldots <s_l\}$ be a set such that $s_i-s_{i-1}\le 2$, for $i=1,2,\ldots, l$. Then $\Sigma$ is a {\it $\delta$-set}. Furthermore, there is a caterpillar of order $2l$ that realizes $\Sigma$.
\end{proposition}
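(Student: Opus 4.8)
The plan is to argue by induction on $l$, carrying along a stronger statement so that the inductive step can be carried out. Precisely, I would show that for every $\Sigma=\{1=s_1<s_2<\dots<s_l\}$ with consecutive differences at most $2$ there is a caterpillar $C$ on $2l$ vertices --- namely a path $v_1v_2\cdots v_k$ with $k=s_l+1$, together with some pendant vertices hung on its internal vertices --- and a labeling $f\colon V(C)\to\Sigma$ such that every $s\in\Sigma$ is assigned to exactly two vertices, those two being at distance $s$, and in addition $f(v_1)=f(v_k)=s_l=\max\Sigma$. Such an $f$ is a proper distance $\Sigma$-labeling of $C$, so by the equivalent description of $\delta$-sets recorded just before the proposition this shows at once that $\Sigma$ is a $\delta$-set and that a caterpillar of order $2l$ realizes it. The base case $l=1$ is $C=P_2$ with both vertices labelled $1$.

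For the inductive step assume $l\ge 2$ and apply the hypothesis to $\Sigma'=\{s_1<\dots<s_{l-1}\}$ --- which again starts at $1$ and has consecutive differences at most $2$ --- obtaining a caterpillar $C'$ with path $v_1\cdots v_k$, $k-1=s_{l-1}$, and $f(v_1)=f(v_k)=s_{l-1}$. If $s_l=s_{l-1}+2$, let $C$ be obtained from $C'$ by adding a new vertex $v_0$ adjacent to $v_1$ and a new vertex $v_{k+1}$ adjacent to $v_k$, keeping $f$ on $V(C')$ and putting $f(v_0)=f(v_{k+1})=s_l$. If $s_l=s_{l-1}+1$, let $C$ be obtained from $C'$ by adding a new vertex $v_0$ adjacent to $v_1$ and a new pendant vertex $w$ adjacent to $v_{k-1}$, then changing the label of $v_k$ to $s_l$ and putting $f(v_0)=s_l$ and $f(w)=s_{l-1}$. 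In either case $C$ is a caterpillar on $2l$ vertices, with path $v_0v_1\cdots v_{k+1}$ (resp.\ $v_0v_1\cdots v_k$) and pendant vertices the old ones (resp.\ the old ones together with $w$).

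It then remains to verify that $f$ is valid on $C$, which is routine. The point is that $C'$ is recovered from $C$ by deleting two vertices that are leaves of $C$, so $C'$ is an isometric subgraph of $C$; hence every pair of equally labelled vertices inherited from $C'$ is still at the right distance. For the new or altered pairs one computes directly: in the first case $d(v_0,v_{k+1})=k+1=s_{l-1}+2=s_l$ and the two ends $v_0,v_{k+1}$ of the new path carry $\max\Sigma$; in the second case $d(v_0,v_k)=k=s_{l-1}+1=s_l$ with the ends $v_0,v_k$ carrying $\max\Sigma$, while the value $s_{l-1}$ is now borne precisely by $v_1$ and $w$, which are at distance $(k-2)+1=k-1=s_{l-1}$. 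Since in $C'$ the value $s_{l-1}$ occurred only at the two path-ends $v_1$ and $v_k$, relabelling $v_k$ destroys no pair other than the one for $s_{l-1}$, and that one has just been re-created at $\{v_1,w\}$; the degenerate subcase $l-1=1$, where $k=2$ and $v_{k-1}=v_1$, is covered by the same formulas. This closes the induction.

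I expect the one real difficulty to be the gap-$1$ case, and in particular guessing the invariant that makes it work. The new distance $s_l=s_{l-1}+1$ cannot be produced by simply lengthening the path by one vertex --- that would create a pair at distance $s_l+1$ --- nor by hanging a leaf off an existing path-end, since such a vertex is already labelled; the workable move is to recycle one of the two existing copies of the current maximum (relabelling it $s_l$) and to restore the lost second copy of $s_{l-1}$ as a pendant vertex one step inside. It is exactly the invariant ``the two copies of the current maximum occupy the two ends of the path'' that makes this surgery harmless, since it forces the relabelled vertex to belong to no other pair, and maintaining that invariant is precisely what feeds the next step of the induction.
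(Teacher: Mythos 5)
Your proof is correct and follows essentially the same route as the paper's: induction on $l$, adding two vertices per step with a case split on whether the new gap is $1$ or $2$, while maintaining an invariant about where the two copies of the current maximum sit (you keep them at the two ends of the spine; the paper keeps them at two leaves). The only substantive difference is your gap-$1$ surgery, which relabels one existing end and restores the displaced copy of $s_{l-1}$ at a new pendant, whereas the paper leaves all old labels intact and places both copies of the new maximum on new vertices (one beyond a leaf, one pendant at that leaf's neighbour); both variants are valid.
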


\begin{proof}
We claim that for each set $\Sigma=\{1=s_1<s_2<\ldots <s_l\}$ such that $s_i-s_{i-1}\le 2$ there is a caterpillar of order $2l$ that admits a $2$-regular distance $\Sigma$-labeling in which the label $s_l$ is assigned to exactly two leaves. The proof is by induction. For $l=1$, the path $P_2$ admits a $2$-regular distance $\{1\}$-labeling, and for $l=2$, the star $K_{1,3}$ and the path $P_4$ admit a $2$-regular distance $\{1,2\}$-labeling and a $2$-regular distance $\{1,3\}$-labeling, respectively. Assume that the claim is true for $l$ and let $\Sigma=\{1=s_1<s_2<\ldots <s_{l+1}\}$ such that $s_i-s_{i-1}\le 2$. Let $\Sigma'=\Sigma\setminus \{s_{l+1}\}$. By the induction hypothesis, there is a caterpillar $G'$ of order $2l$ that admits a regular distance $\Sigma$-labeling of degree $2$ in which the label $s_l$ is assigned to leaves, namely, $u_1$ and $u_2$. Let $u\in V(G')$ be the (unique) vertex in $G'$ adjacent to $u_1$. Then, if $s_{l+1}-s_l=2$, the caterpillar obtained from $G'$ by adding two new vertices $v_1$ and $v_2$ and the edges $u_iv_i$, for $i=1,2$, admits a regular distance $\Sigma$-labeling of degree $2$ in which the label $s_{l+1}$ is assigned to leaves $\{v_1,v_2\}$. Otherwise, if $s_{l+1}-s_l=1$ then the caterpillar obtained from $G'$ by adding two new vertices $v_1$ and $v_2$ and the edges $uv_1$ and $u_2v_2$ admits a regular distance $\Sigma$-labeling of degree $2$ in which the label $s_{l+1}$ is assigned to leaves $\{v_1,v_2\}$.
This proves the claim. To complete the proof, we only have to consider the vertex partition of $G$ defined by the vertices that receive the same label.\qed
\end{proof}

Proposition \ref{propo: construc_a graph that realizes Sigma} provides us with a family of $\delta$-sets, in which, if we order the elements of each $\delta$-set, we get that the differences between consecutive elements are upper bounded by $2$. This fact may lead us to get the idea that the differences between consecutive elements in $\delta$-sets cannot be too large. This is not true in general and we show it in the next result.

\begin{theorem}
Let $\{k_1<k_2<\ldots <k_n\}$ be a set of positive integers. Then there exists a $\delta$-set $\Sigma=\{s_1<s_2<\ldots <s_l\}$ and a set of indices $\{1\le j_1<j_2<\ldots <j_{n}\}$, with $j_{n}<l-1$, such that
$$s_{j_1+1}-s_{j_1}=k_1,\ s_{j_2+1}-s_{j_2}=k_2,\ldots, \ s_{j_n+1}-s_{j_n}=k_n.$$
Moreover, $s_1$ can be chosen to be any positive integer.
\end{theorem}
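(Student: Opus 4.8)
The plan is to realise $\Sigma$ as a disjoint union of ``blocks'' of consecutive integers, one block sitting immediately after each prescribed gap, where each block is realised by a path carrying a Langford sequence.

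I would first fix the blocks. Put $a_0$ equal to the prescribed value of $s_1$. Having chosen a block $B_{i-1}=[a_{i-1},b_{i-1}]$, set $a_i:=b_{i-1}+k_i$, pick any integer $m_i\ge\max\{2a_i-1,\,2\}$ that also satisfies the congruence condition of Theorem~\ref{Theo: existence of Lang} for defect $a_i$ (namely $m_i\equiv 0,1\pmod 4$ if $a_i$ is odd and $m_i\equiv 0,3\pmod 4$ if $a_i$ is even), and set $b_i:=a_i+m_i-1$, $B_i:=[a_i,b_i]$. Such an $m_i$ exists because each residue class modulo $4$ is infinite. Let $\Sigma:=\bigcup_{i=0}^{n}B_i$. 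Since every $k_i\ge 1$, the blocks are pairwise disjoint and appear in increasing order inside $\Sigma$; writing $l:=|\Sigma|$ and $j_i:=|B_0|+\cdots+|B_{i-1}|$, one checks immediately that $s_{j_i+1}-s_{j_i}=\min B_i-\max B_{i-1}=k_i$, that $1\le j_1<j_2<\cdots<j_n$, and that $j_n=l-|B_n|\le l-2<l-1$ because $|B_n|=m_n\ge 2$.

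Then I would show that $\Sigma$ is a $\delta$-set. By Theorem~\ref{Theo: existence of Lang} there is, for each $i$, a Langford sequence of order $m_i$ and defect $a_i$; identifying it, as in the Introduction, with a labeling of the path $G_i:=P_{2m_i}$, one obtains a $2$-regular distance $B_i$-labeling $f_i$ of $G_i$ (each label $k\in B_i$ occurs on exactly two vertices, which are at distance $k$). Let $G$ be the disjoint union of $G_0,G_1,\ldots,G_n$ and let $f$ agree with $f_i$ on $G_i$. Since the label sets $B_0,\ldots,B_n$ are pairwise disjoint, every label $k\in\Sigma$ occurs on exactly two vertices, both lying in a single component $G_i$, and the distance in $G$ between them equals their distance in $G_i$, which is $k$; hence $f$ is a proper distance $\Sigma$-labeling. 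Thus $\Sigma$ is a $\delta$-set, it has the required gaps at the indices $j_i$, and $s_1=a_0$ was an arbitrary positive integer.

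The only genuinely delicate point is the realisation of a single block: a set of consecutive integers $[a,b]$ cannot be realised by a path of arbitrary length, because in a path each label class has exactly two vertices, so doing so amounts precisely to finding a Langford sequence of order $b-a+1$ and defect $a$. This is what forces the block lengths $m_i$ into the appropriate residue classes modulo $4$ and to be at least $2a_i-1$, and Theorem~\ref{Theo: existence of Lang} guarantees that this is always achievable; the remaining verifications (pairwise disjointness of the blocks, the arithmetic of the indices $j_i$, and the fact that two different components never share a label) are routine.
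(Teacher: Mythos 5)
Your proof is correct and follows essentially the same strategy as the paper: realise $\Sigma$ as a union of blocks of consecutive integers, each block carried by a Langford sequence whose defect is chosen to create the prescribed gap $k_i$ after the previous block. The only (immaterial) difference is that you take a disjoint union of paths where the paper concatenates the sequences onto a single path, and you are in fact more careful than the paper in verifying, via Theorem~\ref{Theo: existence of Lang}, that a Langford sequence of each required defect actually exists.
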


\begin{proof}
Choose any number $d_1\in \mathbb{N}$ and choose any Langford sequence of defect $d_1$. We let $d_1=s_1$. (Notice that if $d_1=1$ then the sequence is actually a Skolem sequence). Let this Langford sequence be $L_1$. Next, choose a Langford sequence $L_2$ with defect $\max L_1+k_1$.  Next, choose a Langford sequence $L_3$ with defect $\max L_2+k_2$. Keep this procedure until we have used all the values $k_1,k_2,\ldots,  k_n$. At this point create a new sequence $L$, where $L$ is the concatenation of $L_1, L_2,\ldots, L_{n+1}$ and label the vertices of the path $P_r$, $r=\sum_{i=1}^{n+1}|L_i|$, with the elements of $L$ keeping the order in the labeling induced by the sequence $L$. This shows the result.\qed
\end{proof}

The next result shows that there are sets that are not $\delta$-sets.

\begin{proposition}\label{lema: set {2,3}}
The set $\Sigma=\{2,3\}$ is not a $\delta$-set.
\end{proposition}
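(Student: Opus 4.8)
The plan is to argue by contradiction: suppose some graph $G$ admits a proper distance $\{2,3\}$-labeling $f$. Then $f(V(G))=\{2,3\}$, every vertex carries a label in $\{2,3\}$, and by properness there are at least two vertices labeled $2$ (mutually at distance $2$) and at least two vertices labeled $3$ (mutually at distance $3$). Since no vertex carries the label $1$ and every vertex must be labeled, $G$ has no two adjacent vertices at distance $1$ that force a label $1$ — but that is automatic; the real constraint is that $G$ must actually \emph{be} connected enough for distances $2$ and $3$ to be realized, yet sparse enough in short distances that we never need label $1$. First I would record the structural consequences: if $u$ is adjacent to $v$ in $G$ then $d(u,v)=1$, and since $1\notin f(V(G))$ this imposes no equality constraint, so adjacency itself is unrestricted; the binding conditions come from pairs at distance exactly $2$ or exactly $3$.

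Next I would exploit the interaction between the two labels. Take $x,y$ with $f(x)=f(y)=2$, so $d(x,y)=2$; let $z$ be a common neighbour on a shortest $x$–$y$ path. The vertex $z$ must be labeled $2$ or $3$. If $f(z)=2$ then $d(x,z)=2$, but $d(x,z)=1$ (they are adjacent), a contradiction; so $f(z)=3$. Similarly take $a,b$ with $f(a)=f(b)=3$, so $d(a,b)=3$, and let $a,p,q,b$ be a shortest path; the internal vertices $p,q$ are at distance $1$ or $2$ from $a$, hence cannot be labeled $3$ (distance would have to be $3$) — wait, $q$ is at distance $2$ from $a$, so $f(q)=3$ would force $d(a,q)=3$, contradiction, and $f(q)=2$ would force $d$ between $q$ and every other $2$-labeled vertex to be $2$; in particular $d(q,z)=2$ where $z$ is the $3$-labeled vertex from before — consistent so far, so I would instead chase the vertex $z$: $z$ is labeled $3$, so $z$ is at distance exactly $3$ from \emph{every} other vertex labeled $3$, and there is at least one other such vertex; but $z$ lies on a path of length $2$ between two vertices labeled $2$, and those two vertices are themselves at distance $2$ from each other and (being labeled $2$) at distance $2$ from any other $2$-labeled vertex. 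The key step is to show that the $3$-labeled vertices and $2$-labeled vertices cannot simultaneously satisfy their mutual-distance requirements: I expect a short case analysis on how many vertices receive each label (the regular-degree-$2$ case versus higher multiplicity) will collapse to a configuration where some pair of vertices is forced to be at distance $1$ while sharing a label in $\{2,3\}$, or at two incompatible distances at once.

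The main obstacle will be organizing the case analysis cleanly: a priori a label class can have arbitrarily many vertices, so I cannot just enumerate small graphs. The trick I would lean on is that within the class labeled $2$, \emph{all} pairwise distances equal $2$, so this class is an independent set whose vertices pairwise share a common neighbour pattern; likewise the class labeled $3$ is an independent set with all pairwise distances $3$. A short lemma — if $S$ is a set of vertices with all pairwise distances equal to $2$, then the common neighbourhoods behave like a "book" structure — lets me pin down that any vertex outside $S$ adjacent to two members of $S$ creates distance-$1$ or distance-$2$ conflicts with the label it must carry. Pushing this through, every vertex of $G$ is labeled $2$ or $3$, the two classes partition $V(G)$, and tracing one edge between the classes (which must exist since $G$ is connected and both classes are nonempty and independent) yields two adjacent vertices $u,v$ with $\{f(u),f(v)\}\subseteq\{2,3\}$; then $d(u,v)=1\neq f(u)$ forces nothing directly, but the \emph{other} occurrences of $f(u)$ and $f(v)$ — at distance $2$ and $3$ respectively from $u$ and $v$ — get routed through this edge and produce a vertex simultaneously required to be at distance $2$ and at distance $3$ from a fixed vertex, the final contradiction. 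I would write this out as at most three cases (edge inside no class / between the two classes, and within the between-case, whether the far occurrences share a vertex), each a one-line distance computation.
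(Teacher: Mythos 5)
Your proposal sets up the right framework (argue by contradiction, note that every vertex lies in the $2$-class or the $3$-class, and that a common neighbour $z$ of two $2$-labeled vertices is forced to carry the label $3$ --- this is exactly the paper's opening move), but it never actually closes. The ``final contradiction'' is only asserted: you say you \emph{expect} a short case analysis to ``collapse to a configuration where some pair of vertices is forced to be at distance $1$ while sharing a label,'' and that tracing an edge between the two classes will ``produce a vertex simultaneously required to be at distance $2$ and at distance $3$ from a fixed vertex,'' but no such vertex is exhibited and no case is carried out. An edge $uv$ with $f(u)=2$, $f(v)=3$ by itself imposes no constraint (the labels differ), so the routing argument as stated does not produce a contradiction. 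A second, smaller issue: the auxiliary ``book/star'' lemma you lean on --- that a set of vertices pairwise at distance $2$ are the leaves of a star --- is false in general (three vertices pairwise at distance $2$ need not have a common neighbour); the paper makes the same overstatement, but neither argument actually needs it.

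The frustrating part is that you walk right past the contradiction. On a shortest path $a,p,q,b$ between two $3$-labeled vertices you correctly observe that neither internal vertex can be labeled $3$ (since $d(a,p)=1$ and $d(a,q)=2$). But then \emph{both} $p$ and $q$ must be labeled $2$, and they are adjacent, so $d(p,q)=1\neq 2$ --- done. Instead you compare $q$ with the $3$-labeled vertex $z$ (where no constraint applies), declare the situation ``consistent so far,'' and abandon the line. The paper's own proof takes a slightly longer route through $z$: all neighbours of $z$ are $2$-labeled, so on a shortest path $z,u_1,u_2,b$ to another $3$-labeled vertex $b$ one gets $u_1$ labeled $2$, hence $u_2$ labeled $3$, hence $u_2$ and $b$ adjacent with the same label $3$. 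Either completion is a two-line distance computation; your write-up contains all the ingredients for the first one but does not assemble them, so as it stands the proof is incomplete.
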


\begin{proof}
The proof is by contradiction. Assume to the contrary that $\Sigma=\{2,3\}$ is a $\delta$-set. That is to say, we assume that there exists a sequence $S$ consisting of $k_1$ copies of $2$ and $k_2$ copies of $3$ that is a $\delta$-sequence. Let $G$ be a graph that realizes $S$ and $V_1\cup V_2$ the partition of $V(G)$ defined as follows: if $u,v\in V_i$ then $d_G(u,v)=i+1$, for $i=1,2$. It is clear that $V_1$  must be formed by the leaves of a star with center some vertex $a\in V$. Since $a$ is at distance $1$ of any vertex in $V_1$, it follows that $a$ must be in $V_2$ and furthermore, all vertices adjacent to $a$ must be in $V_1$. Thus, there are no two adjacent vertices in the neighborhood of $a$. At this point, let $b\in V_2\setminus \{a\}$. Then, there is a path of the form $a,u_1,u_2,b$, where $u_1\in V_1$ and hence, $u_2,b\in V_2$. This contradicts the fact that $d_G(u_2,b)=1$. \qed
\end{proof}

The above proof works for any set of the form $\Sigma=\{2,n\}$, for $n\ge 3$. Thus, in fact, Proposition \ref{lema: set {2,3}} can be generalized as follows.

\begin{proposition}\label{lema: set {2,n}}
The set $\Sigma=\{2,n\}$ is not a $\delta$-set.
\end{proposition}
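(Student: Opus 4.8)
The plan is to mimic, and slightly extend, the argument already used for $\Sigma=\{2,3\}$ in Proposition~\ref{lema: set {2,3}}. Suppose for contradiction that $\Sigma=\{2,n\}$ is a $\delta$-set for some $n\ge 3$. Then there is a graph $G$ realizing a sequence $S$ with $k_1$ copies of $2$ and $k_2$ copies of $n$, together with a vertex partition $V(G)=V_1\cup V_2$ such that any two vertices of $V_1$ are at distance $2$ and any two vertices of $V_2$ are at distance $n$.

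The first step is to recover the structural observation that $V_1$ lies inside a single star. Since every pair of vertices of $V_1$ is at distance exactly $2$, the set $V_1$ induces a graph of diameter at most $2$ in which no two vertices are adjacent; picking any $v\in V_1$ and looking at a common neighbour of $v$ and a second vertex $w\in V_1$, one argues (exactly as in the proof of Proposition~\ref{lema: set {2,3}}) that there is one vertex $a$ adjacent to all of $V_1$, that $a\in V_2$, and that every neighbour of $a$ lies in $V_1$ — so in particular the neighbourhood of $a$ is independent.

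The second step is to derive the contradiction from the existence of a second vertex of $V_2$. Take $b\in V_2\setminus\{a\}$; such a vertex exists because $|V_2|=k_2\ge 2$ (as $n\ge 3$ must appear at least twice in a $\delta$-sequence, by condition~(i) with $n=0$ in the definition). Now consider a shortest $a$–$b$ path. Its first internal vertex $u_1$ is a neighbour of $a$, hence $u_1\in V_1$; then the next vertex $u_2$ on the path is at distance $1$ from $u_1$, so $u_2\notin V_1$ (the neighbourhood of $a$ is independent and contains $u_1$, and more to the point two vertices of $V_1$ are at distance $2$, never $1$), forcing $u_2\in V_2$. If $b=u_2$ we already have $a$ and $b$ at distance $2$, contradicting $d_G(a,b)=n\ge 3$. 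Otherwise continue along the path: $u_2,b\in V_2$ but $u_2$ and the vertex following it on the shortest path are adjacent, and that vertex cannot be in $V_2$ (it would be at distance $1$ from $u_2\in V_2$), so it is in $V_1$; repeating, the shortest $a$–$b$ path alternates between $V_1$ and $V_2$, but any two consecutive $V_2$-vertices on it are at distance $2$ in $G$, never at distance $n$ if $n\ne 2$, and consecutive $V_1$-vertices would be at distance $2$ — the cleanest way to finish is to take $u_2$ and $b$: since both lie in $V_2$ we need $d_G(u_2,b)=n$, yet $u_2$ lies on a shortest $a$–$b$ path with $d_G(a,u_2)=2$, so $d_G(u_2,b)=d_G(a,b)-2=n-2<n$, a contradiction. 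Hence no such $G$ exists.

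The only place where genuine care is needed — and the main obstacle — is the sub-case $n=3$ versus $n\ge 4$ in how one phrases the distance bookkeeping along the shortest $a$–$b$ path; for $n=3$ the argument is literally the one in Proposition~\ref{lema: set {2,3}} (the path $a,u_1,u_2,b$ gives $d_G(u_2,b)=1$), while for larger $n$ one simply uses that a shortest path is "locally geodesic" so that $d_G(u_2,b)=d_G(a,b)-d_G(a,u_2)=n-2$, which is still positive and different from both $2$ and $n$, and in particular forces $u_2$ and $b$ to be in different parts — contradiction with $u_2,b\in V_2$. Everything else is a routine transcription of the $\{2,3\}$ proof, so the write-up can be kept to a few lines by saying "the proof of Proposition~\ref{lema: set {2,3}} applies verbatim, replacing $3$ by $n$ throughout".
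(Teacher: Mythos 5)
Your proposal is correct and follows essentially the same route as the paper: the paper gives no separate argument for Proposition~\ref{lema: set {2,n}}, merely asserting that the proof of Proposition~\ref{lema: set {2,3}} carries over, and you correctly identify and fill in the only step that changes, namely that the final contradiction becomes $d_G(u_2,b)=d_G(a,b)-d_G(a,u_2)=n-2\ne n$ rather than $d_G(u_2,b)=1$. (A minor aside: the intermediate claim that $V_1$ consists of the leaves of a single star is not actually needed and is the shakiest part of both write-ups; all that is used is that $a\in V_2$ and that every neighbour of a vertex of $V_2$ lies in $V_1$, which follows directly since a neighbour in $V_2$ would be at distance $1\ne n$.)
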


Notice that, although $\Sigma=\{2,n\}$ is not a $\delta$-set, it is a $\delta_1$-set, since we can consider a star in which the center is labeled with $n$ and the leaves with $2$.

The next result gives a lower bound on the size of $\delta$-sets in terms of the maximum of the set.

\begin{theorem}\label{theo: the lower boun on the size of Sigma}
Let $\Sigma$ be a $\delta$-set with $s=\max \Sigma$. Then, $|\Sigma|\ge \lceil (s+1)/2\rceil$.
\end{theorem}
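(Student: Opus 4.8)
The plan is to show that a $\delta$-set $\Sigma$ with $s = \max\Sigma$ cannot be too small, by analyzing a graph $G$ that realizes a $\delta$-sequence $S$ built from $\Sigma$ and, in particular, by focusing on the pair of vertices $x,y$ with $f(x)=f(y)=s$, i.e.\ with $d_G(x,y)=s$. Fix a shortest $x$--$y$ path $P = (x = w_0, w_1, \ldots, w_s = y)$ in $G$; every vertex $w_t$ receives some label under the proper distance $\Sigma$-labeling $f$, and distinct vertices on a geodesic never have the same label unless the label equals their mutual distance. The idea is that the labels along $P$ are heavily constrained, and counting how many distinct values of $\Sigma$ can possibly occur forces $|\Sigma| \geq \lceil (s+1)/2\rceil$.

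First I would record the basic geodesic fact: if $w_a$ and $w_b$ are two vertices of $P$ with $a < b$ and $f(w_a) = f(w_b) = k$, then $k = d_G(w_a, w_b) = b - a$ (the last equality because a subpath of a geodesic is a geodesic). Hence each label value $k \in \Sigma\setminus\{0\}$ is used by \emph{at most} a bounded number of vertices of $P$: a set of indices in $[0,s]$ all of whose pairwise gaps equal $k$ has size at most $2$ (three of them would give two indices at gap $k$ and two at gap $2k$, contradicting $k = 2k$). So every nonzero label appears on at most two vertices of $P$, and the label $0$ appears on at most one vertex (by definition of $\delta$-set, $0$ occurs exactly once in $S$ if at all). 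Since $P$ has $s+1$ vertices, and the multiset of labels on $P$ uses at most two copies of each nonzero value of $\Sigma$ and at most one copy of $0$, we get $s + 1 \le 2\,|\Sigma\setminus\{0\}| + [0 \in \Sigma]$.

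Then I would convert this inequality into the stated bound. If $0 \notin \Sigma$, then $s+1 \le 2|\Sigma|$, i.e.\ $|\Sigma| \ge (s+1)/2$, hence $|\Sigma| \ge \lceil (s+1)/2\rceil$ since $|\Sigma|$ is an integer. If $0 \in \Sigma$, then $s + 1 \le 2(|\Sigma| - 1) + 1 = 2|\Sigma| - 1$, giving $|\Sigma| \ge (s+2)/2 \ge \lceil(s+1)/2\rceil$, which is even stronger. Either way the claimed inequality holds. I would also double-check the degenerate cases ($s \le 1$) directly: if $s = 0$ then $\Sigma = \{0\}$, but then $0$ must appear twice in $S$ to have two vertices at distance $0$, contradicting the definition — actually $s=0$ forces $\Sigma$ not proper, so $s \ge 1$; and for $s = 1$, $\lceil 1 \rceil = 1 \le |\Sigma|$ trivially.

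The main obstacle, and the only point that needs care, is justifying that we may restrict attention to a single geodesic and that the counting bound "at most two occurrences of each nonzero label among the $s+1$ vertices of that geodesic" is correct — in particular ruling out three collinear vertices with a common label, and handling the possibility that a vertex of $P$ carries the label $0$ (which is fine, it just contributes one vertex). The rest is the elementary arithmetic above. One subtlety to state explicitly: the bound counts vertices \emph{of the geodesic} $P$, not all vertices of $G$, so we are not using properness beyond the existence of the pair realizing the distance $s$; this is exactly what makes the argument go through cleanly.
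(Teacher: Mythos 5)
Your proposal is correct and follows essentially the same route as the paper: fix a geodesic between the two vertices labeled $s$, show each label class meets that geodesic in at most two vertices, and apply the pigeonhole count $s+1\le 2|\Sigma|$. Your justification of the key step is in fact a bit cleaner than the paper's (you use directly that subpaths of a geodesic are geodesics, so three equally-labeled vertices would force $2k=k$, whereas the paper argues via a shortcut contradiction), but the underlying idea is identical.
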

\begin{proof}
Let $\Sigma$ be a $\delta$-set with $s=\max \Sigma$. Let $G$ be a graph that realizes $\Sigma$ and let $V(G)=\cup_{i\in \Sigma}V_i$ be the partition defined as follows: if $u,v\in V_i$ then $d_G(u,v)=i$. Let $a_1,a_2\in V_s$. At this point, let $P=a_1=b_1b_2\ldots b_{s+1}=a_2$ be a path of length $s$ starting at $a_1$ and ending at $a_2$. We claim that there are no three vertices in $V(P)$ belonging to the same set $V_j$, $j\in \Sigma$. We proceed by contradiction. Assume to the contrary that there exist vertices $u,v$ and $w\in V(P)$ such that $u,v,w\in V_j$. That is, $d_G(u,v)=d_G(u,w)=d_G(v,w)=j$. However, it is clear that the above cannot happen if we take the distances in the path. That is to say, the following is impossible: $d_P(u,v)=d_P(v,w)=d_P(v,w)=j$. Without loss of generality assume that $d_p(u,v)=k\ne j$. Clearly, $k>j$. Otherwise, we have that $d_G(u,v)\le k$ instead of $d_G(u,v)=j$. Let $P'$ be a path in $G$ of length $j$ that joins $u$ and $v$. Then, the subgraph of $G$ obtained from $P$ by substituting the subpath of $P$ joining $u$ and $v$ by $P'$ contains a subpath of length strictly smaller than $s$. Thus, we obtain a contradiction.
Hence, each set in the partition of $V(G)$ can contain at most two vertices of $P$. Since $|V(P)|=s+1$, it follows that we need at least $\lceil (s+1)/2\rceil$ sets in the partition of $V(G)$. Therefore, we obtain that $|\Sigma|\ge \lceil (s+1)/2\rceil$. \qed
\end{proof}

It is clear that the above proof cannot be improved in general, since from Proposition \ref{propo: construc_a graph that realizes Sigma} we get that the any set of the form $\{1,3,\ldots, 2n+1\}$ is a $\delta$-set and $|\{1,3,\ldots, 2n+1\}|=\lceil (2n+2)/2\rceil$. Furthermore, Proposition \ref{lema: set {2,n}} is an immediate consequence of the above result. It is also worth to mention that there are sets which meet the bound provided in Theorem \ref{theo: the lower boun on the size of Sigma}, however they are not $\delta$-sets. For instance, the set $\{2,3\}$ considered in Lemma \ref{lema: set {2,3}}. From this fact, we see that we cannot characterize $\delta$-sets from, only, a density point of view. Next we want to propose the following open problem.

\begin{problem}\label{problem: delta_0}
Characterize $\delta$-sets.
\end{problem}

Let $\Sigma$ be a set. By construction, a path of order $|\Sigma|$ in which each vertex receives a different labeling of $\Sigma$ defines a distance $|\Sigma|$-labeling. That is, every set is a $\delta_{|\Sigma|}$-set. So, according to that, we propose the next problem.

\begin{problem}\label{problem: delta_n}
Given a set $\Sigma$ is there any construction that provides the minimum $r$ such that $\Sigma$ is a $\delta_r$-set.
\end{problem}

Thus, the above problem is a bit more general than Problem \ref{problem: delta_0}.

\noindent {\bf Acknowledgements}
The research conducted in this document by the first author has been supported by the Spanish Research Council under project
MTM2011-28800-C02-01 and symbolically by the Catalan Research Council
under grant 2014SGR1147.

\end{document}